\newtheorem{theorem}{Theorem} 
\newtheorem{corollary}[theorem]{Corollary}
\newtheorem{lemma}[theorem]{Lemma}
\newtheorem{exam}{Example}
\newtheorem*{rem}{Remarks}
\renewcommand\th{^{\text{th}}}
\newcommand\commentout[1]{}
\newcommand\Def[1]{{\bf #1}}
\newcommand\ZZ{\mathbb{Z}}
\newcommand\RR{\mathbb{R}}
\newcommand\CC{\mathbb{C}}
\newcommand\cP{\mathcal{P}}
\newcommand\bone{\mathbf{1}}
\newtheorem*{mainconjecture}{Antimagic Graph Conjecture}
\newtheorem*{newmainconjecture}{Directed Antimagic Graph Conjecture}
\begin{document}

\title{Partially Magic Labelings and the Antimagic Graph Conjecture}

\author{Matthias Beck}
\address{Department of Mathematics\\
         San Francisco State University\\
         San Francisco, CA 94132\\
         U.S.A.}
\email{mattbeck@sfsu.edu}

\author{Maryam Farahmand}
\address{Department of Mathematics\\
         University of California\\
         Berkeley, CA 94720\\
         U.S.A.}
\email{mfarahmand@math.berkeley.edu}


\begin{abstract}
The \emph{Antimagic Graph Conjecture} asserts that every connected graph $G = (V, E)$ except $K_2$ admits an edge
labeling such that each label $1, 2, \dots, |E|$ is used exactly once and the sums of the labels on all edges
incident to a given vertex are distinct.  On the other extreme, an  edge labeling is \emph {magic} if the sums of the
labels on all  edges incident to each vertex are the same. In this paper we approach antimagic labelings by
introducing \emph {partially magic labelings}, where ``magic occurs'' just in a subset of $V$.
We generalize Stanley's theorem about the magic graph labeling counting function to the associated
counting function of partially magic labelings and prove that it is  a quasi-polynomial of period at most $2$.
This allows us to introduce \emph{weak antimagic labelings} (for which repetition is allowed), and we show that
every bipartite graph satisfies a weakened version of the Antimagic Graph Conjecture.
\end{abstract}

\keywords{Partially magic labelings of graphs, Antimagic Graph Conjecture, quasi-polynomial, semigroup algebra, Ehrhart theory.}

\subjclass[2010]{Primary 05C78; Secondary 05A15.}


\date{12 November 2015}

\thanks{The authors thank Thomas Zaslavsky for valuable comments.
This research was partially supported by the U.\ S.\ National Science Foundation (DMS-1162638).}

\maketitle


\section{Introduction}\label{intro}

Graph theory is abundant with fascinating open problems. In this paper we propose a new \emph{ansatz} to a
long-standing and still-wide-open conjecture, the \emph{Antimagic Graph Conjecture}.
Our approach generalizes Stanley's enumeration results for magic labelings of a graph \cite{Stan} to partially magic labelings, with which we analyze the structure of antimagic labelings of graphs.

Let $G$ be a finite graph, which may have loops and multiple edges. We shall denote the set of vertices of $G$  by
$V$ and the set of edges by $E$. 
A \Def{labeling} of $G$ is  an assignment $L: E \to \mathbb{Z}_{ \ge 0 }$ of a nonnegative integer $L(e)$ to each edge
$e$ of $G$ and a \Def{$k$-labeling} is one  where each edge label is among $0, 1, \dots, k$. If for every vertex $v$
of $G$ the sum $s(v)$ of the labels of all edges incident to $v$ equals $r$ (counting each loop at $v$ only once) then
$L$ is called a \Def{magic labeling of $G$ of index $r$}.
In the 1970s, Stanley proved some remarkable facts for magic labelings:

\begin{theorem}[Stanley \cite{Stan}]\label{1}
Let  $G$ be a  finite graph and define $H_G(r)$ to be the number of magic labelings of $G$ of index $r$. There exist
polynomials $P_G(r)$ and $Q_G(r)$ such that  $H_G(r)= P_G(r)+(-1)^r Q_G(r)$. Moreover, if the graph obtained by
removing all loops from $G$ is bipartite, then $Q_G(r)=0$, i.e., $H_G(r)$ is a polynomial of~$r$.
\end{theorem}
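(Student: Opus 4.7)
The plan is to encode magic labelings as lattice points in a rational polyhedral cone, apply Ehrhart theory to obtain quasi-polynomiality, and then bound the period by a structural analysis of the cone's extreme rays.

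I would introduce the cone
\[
C_G = \left\{ (L,r) \in \RR^E_{\ge 0}\times\RR_{\ge 0} : \sum_{e\ni v} L(e) = r \text{ for every } v\in V \right\},
\]
where a loop at $v$ contributes only once to the sum. The defining (in)equalities are rational, so $C_G$ is a rational polyhedral cone, and by construction $H_G(r)$ equals the number of lattice points of $C_G$ at height $r$ — equivalently, the number of lattice points in the $r\th$ dilate of the rational polytope $\cP_G = C_G\cap\{r=1\}$. Ehrhart's theorem then guarantees that $H_G$ is a quasi-polynomial whose period divides the least common denominator of the vertex coordinates of $\cP_G$.

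The crux is to show that $\cP_G$ has half-integer vertices in general, and integer vertices when the loopless part of $G$ is bipartite. I would identify each vertex of $\cP_G$ (equivalently, each extreme ray of $C_G$) with a minimal magic labeling and classify these via a graph-theoretic decomposition: iteratively peel a perfect matching or a weighted odd cycle from the support of any magic labeling. This should show that every extreme ray is supported either on \emph{(a)} a perfect matching of $G$ with each edge labeled $1$ (loops permitted, acting as self-matches at their vertex), or on \emph{(b)} a vertex-disjoint union of one odd cycle with a perfect matching on the remaining vertices, with odd-cycle edges labeled $\tfrac12$ and matching edges labeled $1$. Both types lie in $\tfrac12\ZZ^E$, so the period of $H_G$ divides $2$; separating even- and odd-index contributions produces polynomials $P_G$ and $Q_G$ with $H_G(r) = P_G(r)+(-1)^r Q_G(r)$.

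If the loopless part of $G$ is bipartite, then odd cycles cannot appear there, so type \emph{(b)} extreme rays do not arise. Every extreme ray of $C_G$ is then integral, $\cP_G$ is a lattice polytope, its Ehrhart function is a genuine polynomial, and $Q_G\equiv 0$.

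The main obstacle is the decomposition step: rigorously proving that every magic labeling is a nonnegative combination of the two claimed types and that these types are indeed extreme. This is where classical matching theory — in essence, Edmonds' description of the fractional perfect matching polytope, suitably adapted to accommodate loops — enters. Once that characterization of extreme rays is in place, the Ehrhart-theoretic packaging and the parity splitting are routine.
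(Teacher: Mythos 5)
Your proposal is correct in outline, and it is essentially the geometric dialect of the argument the paper imports from Stanley. The paper's route runs through the semigroup $\Phi$ of magic labelings, the rational generating function with denominator $\prod_{\alpha}(1-Z^{\alpha})$ over the completely fundamental elements, and the key fact (Lemma~\ref{3}/Lemma~\ref{4}) that every completely fundamental magic labeling has index $1$ or $2$, with index $1$ forced when there are no odd cycles outside the loops. Your cone $C_G$ is exactly the cone underlying that semigroup, your polytope $\cP_G$ is its height-one slice (the fractional perfect matching polytope, with loops counted once), and ``completely fundamental element of index $1$ or $2$'' translates precisely into ``primitive lattice vector on an extreme ray whose height-one point is integral or half-integral.'' So the two arguments buy the same thing; yours packages the period bound via denominators of vertices and Ehrhart's theorem, the paper's via the $(1-z)^a(1-z^2)^b$ denominator of the generating function. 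In both versions the genuine combinatorial content is the classification of extreme rays, which you correctly identify as the crux and defer to matching theory, just as the paper defers it to Stanley.

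Two small corrections to your classification. First, a vertex of the fractional perfect matching polytope may be supported on a vertex-disjoint union of \emph{several} odd cycles (each carrying weight $\tfrac12$) together with a matching on the remaining vertices -- not just one odd cycle; two disjoint triangles already give an example. This does not affect your conclusion, since such vertices are still half-integral, but the statement as written is not the correct extreme-ray description (compare condition (2) of Lemma~\ref{4}, which allows arbitrarily many cycle components). Second, for the period bound you only need that every vertex of $\cP_G$ lies in $\tfrac12\ZZ^E$, i.e.\ that your two types exhaust the vertices; you do not need to prove that each listed configuration is actually extreme, so that half of the ``main obstacle'' can be dropped.
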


This theorem can be rephrased in the language of quasi-polynomials.
Recall that a \Def{quasi-polynomial} is a function $f: \ZZ \to \CC$ of the form $f(n)=c_{n}(k) \, k^{n} + \dots + c_{1}(k) \, k + c_{0}(k)$ where $c_0(k), \dots , c_n(k)$ are periodic functions in $k$ and the \Def{period} of $f$ is the least common multiple of the periods of $c_0(k), \dots , c_n(k)$. 
Theorem~\ref{1} says that $H_G(r)$ is a quasi-polynomial of period at most~$2$.

On the other extreme, a labeling is \Def{antimagic} if each edge label is a distinct element of  $\{1, 2, \dots , |E| \}$
so that the sums $s(v)$ are distinct. It has been con\-jectured for more than two decades that $K_2$ is essentially
the only graph for which we cannot find an antimagic labeling~\cite{Pearls}:

\begin{mainconjecture}
Every connected graph except for $K_2$ admits an antimagic labeling.
\end{mainconjecture}

Surprisingly this conjecture is still open for \Def{trees}, i.e., connected graphs without cycles, though it has been proven
that trees without vertices of degree $2$ admit an antimagic labeling \cite{Kaplanantimagic}. Moreover, the validity
of the Antimagic Graph Conjecture was proved in \cite{Alon} for connected graphs with minimum degree $\ge c \log |V|$
(where $c$ is a universal constant) and for connected graphs with maximum degree $\ge |V|-2$. We also know that
connected $k$-regular graphs with $k\ge 2$ are  antimagic \cite{Cranston,reg}. Furthermore, all Cartesian
products of regular graphs of positive degree are antimagic \cite{Cart}, as are joins of complete graphs \cite{join}.  For more related results, see the comprehensive survey \cite{Gallian} on graph labelings.

In the classic definition of  antimagic labelings, labels are distinct, however, for magic labelings repetition is
allowed. Borrowing a leaf from the latter, we soften the requirement in the antimagic definition above and say a
labeling is \Def{weakly antimagic} if each edge label is an element of $\{1, 2, \dots, |E|\}$ so that the sums $s(v)$ are
distinct. In other words, we allow repetition among the labels.
Our first main result is as follows.

\begin{theorem}\label{pml}
Let $G$ be a  finite graph. Then the number $A_G(k)$ of weakly antimagic $k$-labelings is a quasi-polynomial in $k$
of period at most $2$.  Moreover if the graph $G$ minus its loops is bipartite, then $A_G(k)$ is a polynomial in~$k$.
\end{theorem}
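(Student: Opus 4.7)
The plan is to count weakly antimagic $k$-labelings by inclusion--exclusion over the partition lattice of $V$, reducing the problem to counts of partially magic labelings, to which the paper's generalization of Theorem~\ref{1} can then be applied.

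For each set partition $\pi = \{B_1, \dots, B_m\}$ of $V$, let $M_\pi(k)$ denote the number of $k$-labelings such that for every block $B_i$, all vertices $v \in B_i$ share a common (but otherwise unspecified) vertex sum $s(v)$. This is a partially magic labeling: the ``magic'' condition is the collection of linear equations $s(u) = s(v)$ for every pair $u,v$ lying in a common block of~$\pi$. The generalization of Stanley's theorem to partially magic labelings established earlier in the paper then implies that each $M_\pi(k)$ is a quasi-polynomial in $k$ of period at most~$2$, and is an honest polynomial when $G$ minus its loops is bipartite.

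Next, let $N_\pi(k)$ count those $k$-labelings for which the equivalence relation $u \sim v \iff s(u)=s(v)$ coincides exactly with $\pi$. Then $M_\pi(k) = \sum_{\sigma \geq \pi} N_\sigma(k)$, with the sum running over all coarsenings $\sigma$ of~$\pi$, and the weakly antimagic $k$-labelings are precisely those counted by $N_{\hat 0}(k)$, where $\hat 0$ is the all-singletons partition. M\"obius inversion on the partition lattice of $V$ then yields
$$A_G(k) \;=\; \sum_{\pi} \mu(\hat 0, \pi) \, M_\pi(k),$$
a finite $\ZZ$-linear combination of quasi-polynomials of period at most~$2$. Hence $A_G(k)$ is itself such a quasi-polynomial, and in the bipartite case each summand is a polynomial, so $A_G(k)$ is a polynomial as well.

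The main obstacle lies in the first step: the notion of partially magic labeling as described in the abstract equalizes sums over a single subset $W \subseteq V$, whereas the argument above requires equalizing sums block-by-block for a partition of $V$. One needs either to verify that the earlier theorem applies simultaneously to several disjoint ``magic subsets'' (which it should, since the underlying polyhedral/semigroup-algebra argument is insensitive to the number of independent sum equalities imposed), or to iterate it. A minor bookkeeping point is the distinction between $k$-labelings with labels in $\{0,1,\dots,k\}$ and the label set $\{1,\dots,k\}$ implicit in the antimagic definition, which can be handled by a preliminary inclusion--exclusion on the set of edges carrying the label~$0$.
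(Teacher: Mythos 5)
Your overall strategy is the same as the paper's: expand $A_G(k)$ by inclusion--exclusion into counts of partially magic labelings, invoke the period-$2$ result for those counts, and deal separately with the positivity of the labels (the paper handles the last point via Ehrhart--Macdonald reciprocity, Corollary~\ref{12}, rather than your inclusion--exclusion over the edges labeled $0$; both devices work). You are also right that the M\"obius expansion naturally runs over the partition lattice: for $|V|\ge 4$ it produces terms such as $\#\{L:\, s(v_1)=s(v_2) \text{ and } s(v_3)=s(v_4)\}$, which are not single-subset counts $M^\circ_S(k)$, so your partition formulation is the correct bookkeeping. (The paper's own identity~\eqref{11} is written over single subsets $S$, which makes Theorem~\ref{thm:partialmagiccounting} directly applicable but is not a valid inclusion--exclusion once two disjoint coincidence pairs can occur; so the issue you flag is real and is present in the paper as well.)

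The genuine gap is precisely the point you flag and then dismiss. It is not true that ``the underlying polyhedral/semigroup-algebra argument is insensitive to the number of independent sum equalities imposed.'' That machinery gives quasi-polynomiality of $M_\pi(k)$ for free, with \emph{some} period; the bound of $2$ comes entirely from Lemma~\ref{7}, which bounds the labels of the completely fundamental elements. Its proof is tied to a single block: the decompositions $2L=\sum_i L^i$ keep the labels of $L$ on edges not incident to $S$ in $L^1,L^2$ and zero them out for $i\ge3$, which is harmless only because no magic condition lives away from $S$; with several blocks this destroys the condition on the other blocks. Worse, the multi-block analogue of Lemma~\ref{7} is false. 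Take two disjoint triangles $xyz$ and $pqr$, attach pendant edges $xd_1$, $yd_2$, $pd_1'$, $qd_2'$, and let $\pi$ have blocks $\{x,y\}$, $\{z,d_1\}$, $\{d_2,r,d_1'\}$, $\{p,q\}$. The labeling $L$ with $L(xz)=L(xd_1)=1$, $L(yd_2)=L(pr)=L(pd_1')=2$, $L(qd_2')=4$, and $L=0$ on $xy$, $yz$, $pq$, $qr$ is partially magic over every block (block sums $2$, $1$, $2$, $4$). If $nL=L'+L''$ in the associated semigroup, the zero labels force $L'(xy)=L'(yz)=L'(pq)=L'(qr)=0$, and then the four block conditions successively give $L'(xd_1)=L'(xz)=:t$, $L'(yd_2)=2t$, $L'(pr)=L'(pd_1')=2t$, $L'(qd_2')=4t$, so $L'=tL$. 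Hence $(L,4)$ is completely fundamental with a label (and $k$-coordinate) equal to $4$, and the denominator~\eqref{6} acquires a factor $1-z^4$. So your assertion that each $M_\pi(k)$ has period at most $2$ is unproved, and the natural route to it fails; the theorem could still follow if the large-period contributions cancel in the alternating sum, but that requires a genuinely new argument (or a correct reduction to single-subset counts), not a routine extension of Theorem~\ref{thm:partialmagiccounting}.
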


We remark that antimagic counting functions of the flavor of $A_G(k)$ already surfaced in \cite{iop}.
At any rate,  Theorem \ref{pml} implies that for the bipartite graphs we have a chance of using the
polynomial structure of $A_G(k)$ to say something about the antimagic character of $G$. Namely, the number
of $k$-labelings is at most  $(k+1)^{|E|}$ and so the degree of the polynomial $A_G(k)$ is at most $|E|$.
Therefore,  $A_G(k)$ can have at most $|E|$ integer roots, one of which is $0$, as we will show below. Also,
by definition we know that $A_G(k+1)\ge A_G(k)$ for any $k \in \ZZ_{\ge 0}$, and so $A_G(|E|)$ cannot be
zero.
What we just illustrated (though we will give a careful proof below) is that Theorem~\ref{pml} implies:

\begin{theorem}\label{pml2}
Every bipartite graph without a $K_2$ component admits a weakly antimagic labeling.
Furthermore, every graph $G=(V,E)$ without a $K_2$ component admits a labeling with distinct vertex sums $s(v)$ and labels in $\{1, 2, \dots, 2|E| \}$.
\end{theorem}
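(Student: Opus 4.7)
The plan is to deduce both statements from Theorem~\ref{pml} together with three observations about $A_G$. First, $A_G$ is weakly increasing: any labeling with labels in $\{0,1,\dots,k\}$ and distinct vertex sums remains such a labeling with labels in $\{0,1,\dots,k+1\}$. Second, $A_G(0) = 0$ whenever $|V| \geq 2$, because the all-zero labeling makes every vertex sum equal to $0$. Third, $A_G$ is \emph{not} identically zero: enumerating the edges as $e_1,\dots,e_{|E|}$ and setting $L(e_i) := 2^{i-1}$, the sum $s(v) = \sum_{e_i \ni v} 2^{i-1}$ records the set of edges incident to $v$ in binary, so $s(u) = s(v)$ for $u \neq v$ would force $u$ and $v$ to be incident to exactly the same collection of edges---which under the no-$K_2$-component hypothesis cannot occur. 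Thus $A_G(2^{|E|-1}) \geq 1$.

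For the bipartite statement, Theorem~\ref{pml} says $A_G$ is a genuine polynomial, and the bound $A_G(k) \leq (k+1)^{|E|}$ forces $\deg A_G \leq |E|$. If $A_G(|E|) = 0$, monotonicity yields $A_G(k) = 0$ for every $k = 0, 1, \dots, |E|$, giving $|E|+1$ roots of a polynomial of degree at most $|E|$ and hence $A_G \equiv 0$, contradicting the third observation. So $A_G(|E|) \geq 1$ and a weakly antimagic labeling exists.

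For the general statement, write $A_G(k) = P(k) + (-1)^k Q(k)$ as in Theorem~\ref{pml}, with $\deg P, \deg Q \leq |E|$ by the same $(k+1)^{|E|}$ bound. Restricted to even arguments, $A_G$ agrees with the ordinary polynomial $P + Q$, which thus has degree at most $|E|$. If $A_G(2|E|) = 0$, monotonicity forces $A_G(2m) = 0$ for $m = 0, 1, \dots, |E|$, producing $|E|+1$ roots of $P+Q$ and hence $P + Q \equiv 0$; but then $A_G$ would vanish on every even integer, contradicting $A_G(2^{|E|-1}) \geq 1$ (valid for $|E| \geq 2$; the cases $|E| \leq 1$ are routine). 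Hence $A_G(2|E|) \geq 1$, producing the required labeling. The only step needing real care is the binary construction itself: in the multigraph setting the no-$K_2$-component hypothesis must be read as ``no two distinct vertices are incident to exactly the same multiset of edges,'' and once that interpretation is pinned down, the rest reduces to polynomial root counting together with monotonicity.
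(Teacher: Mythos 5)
Your argument is correct, and its skeleton --- monotonicity of $A_G$, the degree bound $\deg A_G \le |E|$ from $A_G(k) \le (k+1)^{|E|}$, and finding $|E|+1$ consecutive (resp.\ consecutive even) roots of the relevant polynomial constituent to force a contradiction --- is exactly the paper's proof of Theorem~\ref{pml2}. The one place you genuinely diverge is in establishing $A_G \not\equiv 0$. The paper does this by induction on the number of edges (Lemma~\ref{quasi}): delete an edge $e$ so that $G \setminus e$ still has no $K_2$ component, take an antimagic labeling of $G \setminus e$ by induction, and assign $e$ a sufficiently large label; the base case is a disjoint union of paths on three vertices. Your powers-of-two labeling $L(e_i) = 2^{i-1}$ is a clean non-inductive alternative: it exhibits one explicit labeling with distinct vertex sums, and it isolates exactly when such a labeling cannot exist --- namely when two distinct vertices are incident to the very same collection of edges (a component that is two vertices joined only by parallel edges, or a pair of isolated vertices). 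The paper's Lemma~\ref{quasi} silently glosses over these degenerate multigraph configurations, so your closing caveat is added precision rather than a defect. Two small points to tidy: (i) since $A_G(k)$ is assembled from the $M^\circ_S(k)$ in \eqref{11}, it counts labelings with labels in $\{1,\dots,k\}$, so $A_G(0)=0$ because the label set is empty rather than because the all-zero labeling equalizes the sums --- either reading gives $0$, but only the positive-label reading yields the advertised conclusion that the labels lie in $\{1,\dots,2|E|\}$ rather than $\{0,\dots,2|E|\}$; (ii) the contradiction in the general case needs $2^{|E|-1}$ to be even, i.e.\ $|E| \ge 2$, which you correctly flag and which is where the degenerate small cases must be disposed of separately.
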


We approach weakly antimagic labelings by introducing a new twist on magic labelings which might be of independent
interest. Fix a subset $S$ of vertices of $G$.  A \Def{partially magic labeling of $G$ over $S$} is a labeling such
that ``magic occurs'' just in $S$, that is, for all $v \in S$ the sums $s(v)$ are equal.

\begin{theorem}\label{thm:partialmagiccounting} 
Let $G$ be a finite graph and $S \subseteq V.$ The number $M_S(k)$ of partially magic $k$-labelings over $S$ is a
quasi-polynomial in $k$ with period at most $2$. Moreover, if  the graph $G$ minus its loops is bipartite, then
$M_S(k)$ is a polynomial in~$k$.
\end{theorem}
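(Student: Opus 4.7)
The plan is to realize $M_S(k)$ as the Ehrhart quasi-polynomial of a rational polytope and then bound its period via the structure of the vertex-edge incidence matrix of $G$, adapting Stanley's approach to Theorem~\ref{1}.

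First I would set
\[
Q_S \;:=\; \bigl\{ L \in [0,1]^E : s(v) = s(w) \text{ for all } v, w \in S \bigr\},
\]
a rational polytope in $\RR^E$. A labeling $L\colon E \to \{0,1,\dots,k\}$ is partially magic over $S$ if and only if $L \in kQ_S \cap \ZZ^E$, so $M_S(k)$ is the Ehrhart counting function of $Q_S$. By Ehrhart's theorem, $M_S(k)$ is automatically a quasi-polynomial in $k$, and its period divides the least common multiple of the denominators of the vertices of $Q_S$; so it suffices to show these denominators are at most $2$ in general, and all equal to $1$ when $G$ minus its loops is bipartite.

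Next I would analyze the vertices of $Q_S$ combinatorially. A vertex $L^*$ is determined by the subset $T \subseteq E$ of edges where the box constraints $L^*(e) \in \{0,1\}$ are tight; the remaining coordinates $L^*|_{F}$ with $F = E \setminus T$ form the unique solution of a square invertible linear system extracted from the partial-magic equalities $s(v) = s(w)$. The coefficient matrix of this system is a square submatrix of $\widetilde N$, where $\widetilde N$ is the $(|S|-1) \times |E|$ matrix whose rows are the differences $n_v - n_{v_0}$ of rows of the vertex-edge incidence matrix $N$ of $G$, for a fixed reference vertex $v_0 \in S$. Since the right-hand side is an integer vector arising from the tight values on $T$, the denominator of any coordinate of $L^*$ divides the absolute value of the determinant of that square submatrix of $\widetilde N$.

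The heart of the proof, and the step I expect to be the principal obstacle, is the determinant bound: every square invertible submatrix of $\widetilde N$ has determinant $\pm 1$ or $\pm 2$, with the value $\pm 2$ occurring only when the support of the submatrix involves an odd cycle of $G$. Using the row-and-column bordering identity
\[
\det\bigl( \widetilde N|_{R, F} \bigr) \;=\; \pm \det \begin{pmatrix} N|_{R, F} & \bone_R \\ n_{v_0}|_F^{\,T} & 1 \end{pmatrix},
\]
this reduces to analysing the determinant of a bordered incidence-type matrix, which can be attacked through the classical graph-theoretic fact that square submatrices of the incidence matrix of a graph have determinant in $\{0, \pm 1, \pm 2\}$, and are totally unimodular when the graph (minus loops) is bipartite. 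Once this bound is established, Ehrhart's theorem gives the period bound of $2$ in general, and in the bipartite-minus-loops case it gives integer vertices and hence polynomiality of $M_S(k)$.
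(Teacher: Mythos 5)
Your reduction of the theorem to a statement about the vertices of the polytope $Q_S$ (which is the paper's $\cP_S$) is sound: $M_S(k)=L_{Q_S}(k)$, and the period of an Ehrhart quasi-polynomial divides the least common multiple of the denominators of the vertex coordinates. The gap is exactly where you predicted it, but it is fatal as proposed: the claimed determinant bound for $\widetilde N$ is false. First, square invertible submatrices of $\widetilde N$ can have determinant $\pm 4$ (and more generally $\pm 2^c$): take $G$ to be three disjoint triangles, $S=V$, $v_0=a$ a vertex of the first triangle, and the $8\times 8$ submatrix of $\widetilde N$ on all rows and all edges except one edge at $a$; expanding out the two rows coming from the first triangle leaves $-1$ times the block-diagonal incidence matrix of the other two triangles, giving determinant $\pm 4$. (Bridging the triangles to make $G$ connected does not change this submatrix.) This reflects the fact that the classical result you invoke actually says subdeterminants of an unoriented incidence matrix lie in $\{0,\pm 2^c\}$ with $c$ the number of components containing an odd cycle, not in $\{0,\pm1,\pm2\}$. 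Second, and worse for your bipartite claim: already for $G=K_{1,3}$ with $S=V$ and $v_0$ the center, the full $3\times3$ matrix $\widetilde N$ is $-(J_3-I_3)$ up to sign and has determinant $\pm 2$, so the subdeterminant route cannot even establish integrality of vertices for loopless bipartite graphs. In both examples the polytope's vertices happen to be integral, which shows that Cramer's rule applied to arbitrary invertible submatrices is too coarse: at an actual vertex \emph{all} equality constraints must hold simultaneously, and the denominator of the unique solution can be much smaller than an individual basis subdeterminant. You would need to restrict to the tight-constraint systems that genuinely occur at vertices, and no purely local determinant argument does that.

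The paper avoids this entirely by working with the homogenization: it views the pairs $(L,k)$ as a semigroup $\Phi$, applies Stanley's rational generating function theorem to write $\sum_k M_S(k)z^k$ with denominator $\prod_{\alpha\in\operatorname{CF}(\Phi)}(1-Z^\alpha)$, and proves (Lemma~\ref{7}) that every completely fundamental element has labels in $\{0,1,2\}$ by an explicit decomposition: if some label is $\ge 3$, contract everything outside $S$ into loops to get a genuine magic labeling $L_S$ of an auxiliary graph $G_S$, invoke Stanley's Lemma~\ref{3} to split $2L_S$, and lift the pieces back to $G$. Since the completely fundamental elements are precisely the primitive lattice points on the rays through the vertices of $Q_S$ at height $1$, this is equivalent to the half-integrality (respectively integrality, via Lemma~\ref{4} in the bipartite case) of the vertices that you are after --- so your target is true, but the decomposition argument on labelings, rather than a subdeterminant bound, is what proves it. If you want to salvage a linear-algebraic proof, you would need to characterize which bases actually support vertices of $Q_S$ (essentially the subgraph structures of Lemma~\ref{4}(2)) and bound determinants only for those.
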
 

In order to prove Theorem~\ref{thm:partialmagiccounting}, we will follow Stanley's lead in \cite{Stan} and use linear
Diophantine homogeneous equation and Ehrhart quasi-polynomials to describe partially magic labelings of a graph;
Section~\ref{sec:partialmagic} contains a proof of Theorem~\ref{thm:partialmagiccounting}.
In Section~\ref{sec:antimagic} we prove Theorems~\ref{pml} and~\ref{pml2}.
We conclude in Section~\ref{sec:outlook} with some comments on a directed version of the Antimagic Graph Conjecture, as well as open problems.


\section{Enumerating Partially Magic Labeling}\label{sec:partialmagic}


Given a finite graph $G = (V, E)$ and a subset $S \subseteq V$, we introduce an indeterminate $z_e$ for each edge $e$
and let $\{v_1, \ldots, v_s\}$ be the set of all vertices of $S$, where $|S|=s$.
In this setup, a partially magic $k$-labeling over $S$ corresponds to an integer solution of the system of equations and inequalities

\begin{equation}\label{abc}
\sum_{e \text{ incident to } v_j} z_e \ = \sum_{e \text{ incident to } v_{j+1}} z_e \, \quad  (1 \le j \le s-1)
\quad \text{ and } \quad 0 \le z_e \le k \, .
\end{equation}
Define $\Phi$ as the set of all pairs $(L,k)$ where $L \in \ZZ_{\ge 0}^{E}$ is a partially magic $k$-labeling; that
is, $(L,k)$ is a solution to (\ref{abc}). 
If $L$ is a partially magic $k$-labeling and $L'$ is a partially magic $k'$-labeling, then $L+L'$ is a partially magic $(k+k')$-labeling.
Thus $\Phi$ is a semigroup with identity ${\bf 0}$. 
(This is also evident from~\eqref{abc}.)

For the next step, we will use the language of generating functions, encoding all partially magic $k$-labelings as
monomials. Let $q = |E|$ and define
\begin{equation}\label{a2}
F(Z) \ = \ 
F(z_1, \ldots, z_q, z_{q+1}) \ := 
\sum_{(L,k)\in \Phi} z_{1}^{L(e_1)}\cdots z_{q}^{L(e_q)} z_{q+1}^k \, .
\end{equation}
Note that if we substitute $z_{1}=\dots=z_{q}=1$ in $F(Z)$, we enumerate all partially magic $k$-labelings:
\begin{equation}\label{a3}
F(\bone, z_{ q+1 } ) \ = 
\sum_{(L,k) \in \Phi} z_{ q+1 }^k \ = \ 
\sum_{k \ge 0} M_S(k) \; z_{ q+1 }^k ,
\end{equation}
where we abbreviated $\bone := (1, 1, \dots, 1)$.

We call a nonzero element $\alpha=(\alpha_1, \ldots, \alpha_q,k) \in \Phi$ \Def{fundamental} if it cannot be
written as the sum of two nonzero elements of $\Phi$; furthermore, $\alpha$ is \Def{completely fundamental} if no positive
integer multiple of it can be written as the sum of nonzero, nonparallel elements of $\Phi$ (i.e., they are not
scaler multiple of each other). In other words, a completely fundamental element $\alpha \in \Phi$ is a nonnegative
integer vector such that for each positive integer $n$, if $n\alpha=\beta+\gamma$ for some $\beta, \gamma \in \Phi$,
then $\beta=j\alpha$ and $\gamma=(n-j) \alpha$ for some nonnegative integer $j$. Note that by taking $n=1$ in the
above definition, we see that every completely fundamental element is fundamental. Also note that any fundamental
element $(\alpha_1, \ldots, \alpha_q,k)$ necessarily satisfies $k=\max \{\alpha_1,\ldots,\alpha_q\}$.

Now we focus on the generating functions \eqref{a2} and \eqref{a3} and employ \cite[Theorem 4.5.11]{StanEC}, which
says in our case that 
the generation function $F(Z)$ can be written as a rational function with denominator 
\begin{equation}\label{6}
  D(Z) \ := \prod_{\alpha \in \operatorname{CF} (\Phi)} \left( 1- Z^{\alpha} \right) ,
\end{equation}
where $\operatorname{CF}(\Phi)$ is the set of completely fundamental elements of $\Phi$ and we used the 
monomial notation $Z^\alpha := z_1^{ \alpha_1 } z_2^{ \alpha_2 } \cdots z_q^{ \alpha_q } z_{ q+1 }^k$. 
To make use of \eqref{6}, we need to know some information about completely fundamental solutions to (\ref{abc}).
To this extent, we borrow the following lemmas from magic labelings \cite{Stan}, i.e., the case $S = V$: 

\begin{lemma}\label{3}
For a finite graph $G$, every completely fundamental magic labeling has index $1$ or $2$. More precisely, if $L$ is
any magic labeling of $G$, then $2L$ is a sum of magic labelings of index $2$.
\end{lemma}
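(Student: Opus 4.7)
The plan is to prove the stronger assertion---that for any magic labeling $L$ of index $r$, the doubled labeling $2L$ is a sum of $r$ magic labelings of index $2$---and then deduce the classification of completely fundamental labelings from it.

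For the main decomposition, I would construct an auxiliary multigraph $H$ on vertex set $V$ by including $2L(e)$ parallel copies of each non-loop edge $e$ and $L(e)$ copies of each loop $e$. The magic condition on $L$ translates into $H$ being $2r$-regular in the standard graph-theoretic sense (where each loop contributes $2$ to its vertex's degree): a non-loop $e$ at $v$ contributes $2L(e)$ to the degree of $v$ in $H$, while a loop $e$ at $v$ contributes $2 \cdot L(e)$, so the total degree at $v$ equals $2s(v) = 2r$. By Petersen's classical $2$-factor theorem, the $2r$-regular multigraph $H$ decomposes into $r$ edge-disjoint spanning $2$-regular subgraphs $F_1, \dots, F_r$. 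Define labelings $L_1, \dots, L_r$ on $G$ by setting $L_i(e)$ equal to the multiplicity of $e$ in $F_i$ when $e$ is a non-loop, and twice that multiplicity when $e$ is a loop. A direct check then yields $\sum_i L_i(e) = 2L(e)$ for every edge, and the degree-$2$ constraint on $F_i$ becomes exactly the index-$2$ magic equation $s_{L_i}(v) = 2$ once the loop-counting conventions are reconciled.

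For the classification, suppose $\alpha$ is a completely fundamental magic labeling of index $r \geq 1$. Applying the decomposition to $L = \alpha$ gives $2\alpha = L_1 + \cdots + L_r$ with each $L_i$ magic of index $2$. Peeling off the summands one at a time and invoking the defining property of complete fundamentality, each $L_i$ must be a nonnegative integer multiple $j_i \alpha$ with $\sum_i j_i = 2$; since each $L_i$ is nonzero we have $j_i \geq 1$, which forces $r \leq 2$.

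The main obstacle is the loop bookkeeping: the magic convention counts a loop only once in $s(v)$ while the standard graph-theoretic degree counts it twice, so the multiplicities chosen for $H$ (namely $2L(e)$ for non-loops but $L(e)$ for loops) and the compensating doublings of loop labels in each $L_i$ must be tuned in a coordinated way to keep both the per-vertex equation $s_{L_i}(v) = 2$ and the per-edge equation $\sum_i L_i = 2L$ simultaneously valid. Along the way one should verify that Petersen's $2$-factor theorem remains valid for multigraphs with loops, which it does via the standard Eulerian-tour construction.
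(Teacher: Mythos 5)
Your argument is correct: the doubled multigraph $H$ is $2r$-regular once loops are given multiplicity $L(e)$ (so that they contribute $2L(e)$ to the degree), Petersen's $2$-factorization applies componentwise via the Eulerian-orientation/K\"onig argument even in the presence of loops, and your bookkeeping (loop labels doubled in each $L_i$) makes both $\sum_i L_i = 2L$ and $s_{L_i}(v)=2$ check out, after which the peeling argument correctly pins $r\le 2$. The paper itself gives no proof of this lemma --- it is imported verbatim from Stanley's paper on magic labelings --- and your proof is essentially Stanley's original one, which likewise reduces the statement to Petersen's theorem on $2$-factors of regular multigraphs.
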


\begin{lemma}\label{4} For a finite graph $G$, the following conditions are equivalent:
\begin{enumerate}[{\rm (1)}]
\item Every completely fundamental magic labeling of $G$ has index~$1$.
\item If $G'$ is any spanning subgraph of $G$ such that every connected component of $G'$ is a loop, an edge, or a
cycle of length $\ge 3$, then every one of these cycles of length greater than or equal to $3$ must have even length.
\end{enumerate}
\end{lemma}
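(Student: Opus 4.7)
My plan is to establish the two implications of Lemma~\ref{4} by contrapositive.

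For $(1) \Rightarrow (2)$, assume $(2)$ fails, so $G$ admits a spanning subgraph $G'$ whose components are loops, single edges, and cycles of length $\ge 3$, at least one of which -- call it $C$ -- has odd length. I would define a labeling $L$ of $G$ by assigning label~$1$ to every edge of each odd cycle of $G'$, label~$2$ to every single-edge and every loop component, the alternating pattern $2,0,2,0,\dots$ (via any fixed perfect matching of the cycle) to every even cycle component, and label~$0$ to every edge outside $G'$. A direct check shows $s_L(v) = 2$ at every vertex, so $L$ is magic of index~$2$. To verify $L$ is completely fundamental, take any decomposition $nL = \alpha + \beta$ in $\Phi$. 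The coordinatewise bound $\alpha \le nL$ forces $\alpha(e) = 0$ whenever $L(e) = 0$; combined with the magic equations at the endpoints of such zero-labeled edges, $\alpha$ is pinned down to a scalar multiple of $L$ on each even-cycle, single-edge, and loop component. Walking around the odd cycle $C$ forces $\alpha$ to be constant there, so the index $r_\alpha$ of $\alpha$ must be even, say $r_\alpha = 2j$; the pin-down on every component then produces $\alpha = jL$. Hence $L$ is completely fundamental of index~$2$, contradicting~$(1)$.

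For $(2) \Rightarrow (1)$, I assume $(1)$ fails and fix a completely fundamental magic labeling $L$; by Lemma~\ref{3} its index is $1$ or~$2$, and the hypothesis lets us take it of index~$2$. My goal is to show that the support $G' = \{e : L(e) > 0\}$ is itself a spanning subgraph of the shape prescribed by $(2)$ and contains an odd cycle of length $\ge 3$, contradicting~$(2)$. Since $s_L(v) = 2$ and labels are nonnegative integers, each vertex $v$ is incident in $G'$ either to one label-$2$ edge (loop or non-loop) or to two label-$1$ edges (loops and/or non-loops). I would rule out the pathological local configurations -- a vertex with two incident label-$1$ loops, a vertex with one label-$1$ loop and one label-$1$ non-loop edge, a pair of parallel label-$1$ edges, and an even label-$1$ cycle of length $\ge 4$ -- by showing that each admits a local ``label swap'' producing a magic labeling $\alpha$ of $G$ of index~$2$ with $2L = \alpha + (2L - \alpha)$ and $\alpha \not\parallel L$, violating complete fundamentality. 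The remaining case, in which $G'$ has only single label-$2$ edges and loops, is ruled out because then $L = 2L'$ with $L'$ the index-$1$ indicator of those edges/loops, and $L = L' + L'$ with $L'$ not an integer multiple of $L$. What remains is exactly the structure allowed by $(2)$, and at least one of its cycles must have odd length (otherwise alternating labels along each even cycle would again decompose $L$ into two index-$1$ labelings).

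The hard part will be the case analysis in the second direction: for each pathological local configuration of $L$'s support, I must exhibit a magic labeling $\alpha$ of $G$ of index~$2$ that is nonzero and genuinely not parallel to $L$. The enabling observation is that the rigid components -- single label-$2$ edges and loops, and odd all-label-$1$ cycles of length $\ge 3$ -- admit only one magic labeling of any given even index $r_\alpha$, forcing $\alpha$ there to match a fixed multiple of $L$, while each pathological component admits a one-parameter family of index-$2$ magic extensions, providing the freedom needed to shift $\alpha$ away from $L$ on that one component without disturbing its value elsewhere.
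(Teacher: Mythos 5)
Your proposal is correct, and there is nothing in the paper to compare it against: Lemma~\ref{4} is quoted from Stanley's work on magic labelings and is not proved in this paper, so what you have written is a genuinely self-contained argument. It holds up. In the direction $(1)\Rightarrow(2)$ your two key choices are exactly the right ones: putting the pattern $2,0,2,0,\dots$ (rather than all $1$'s) on the even cycle components is what pins $\alpha$ down there via the coordinatewise bound $\alpha\le nL$, and the odd cycle is what forces the index $r_\alpha$ to be even, after which $\alpha=(r_\alpha/2)L$ on every component and $L$ is completely fundamental of index~$2$. In the direction $(2)\Rightarrow(1)$ your taxonomy of the support of an index-$2$ magic labeling is exhaustive (each vertex is incident either to one label-$2$ edge or to two label-$1$ edge-incidences, counting a loop once, so the components are single label-$2$ edges or loops, label-$1$ cycles, loop-ended label-$1$ paths, double loops, and parallel pairs); the support is automatically spanning because every vertex has sum~$2$; each non-(2)-shaped component admits the $2/0$ swap producing $2L=\alpha+\beta$ with $\alpha\ne jL$; and the residual cases (all components rigid but no odd cycle) decompose as a sum of two index-$1$ labelings. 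When you write this up, make explicit the one routine check you are implicitly using throughout: setting $\alpha=L$ off the modified component keeps $\alpha$ magic of the same index at \emph{every} vertex, which works precisely because the edges you alter meet no vertex outside their own component.
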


Lemma \ref{3} implies that every completely fundamental magic labelings has index $1$ or $2$ and therefore, it cannot
have a label $\ge 3$ (because labels are nonnegative). By the same reasoning, if $G$ satisfies the condition $(2)$ in
Lemma \ref{4}, every completely fundamental magic labeling of it has index 1 and so cannot have labels $\ge 2$. We
now give the analogous result for \emph{partially} magic labelings:


\begin{lemma}\label{7}
Every completely fundamental partially magic labeling of $G$ over $S$ has labels $0$, $1$, or~$2$.
\end{lemma}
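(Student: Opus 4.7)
The plan is to reduce the partially magic case to Stanley's Lemma~\ref{3} via a ``reduced graph'' construction on $S$ together with a polytope-slice analysis of the rational cone generated by $\Phi$.

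First I would construct $G^S$, the graph on vertex set $S$ defined from $G$ as follows: a $G$-edge with both endpoints in $S$ becomes an edge of $G^S$; a $G$-edge with exactly one endpoint $v \in S$ becomes a loop at $v$ in $G^S$; and a $G$-edge with no endpoint in $S$ is discarded. Writing $E = E^{(a)} \sqcup E^{(b)}$, with $E^{(a)}$ the $S$-touching edges and $E^{(b)}$ the remaining edges, every partially magic labeling $L$ of $G$ over $S$ splits as $L = L^{(a)} + L^{(b)}$; the touching piece $L^{(a)}$ is in bijection with a fully magic labeling $\hat L$ of $G^S$ of the same index (the paper's convention that loops count once in $s(v)$ is precisely what makes this reduction work), while the non-touching piece $L^{(b)}$ is an arbitrary nonnegative labeling subject only to $L^{(b)}(e) \le k$.

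Next I would consider the rational polyhedral cone $\mathcal{C} \subseteq \RR^{|E|+1}$ cut out by the equalities and inequalities of~\eqref{abc} together with $k \ge L(e)$, so that $\Phi = \mathcal{C} \cap \ZZ^{|E|+1}$. The completely fundamental elements of $\Phi$ are exactly the primitive integer generators of the extreme rays of $\mathcal{C}$, and these are in bijection with the vertices of the polytope $\mathcal{P} := \mathcal{C} \cap \{k=1\}$. Since fixing $k=1$ eliminates the only constraint coupling $E^{(a)}$ with $E^{(b)}$, the polytope factors as a direct product $\mathcal{P} = \mathcal{P}_A \times \mathcal{P}_B$, where $\mathcal{P}_A$ is the polytope of fully magic labelings of $G^S$ with labels in $[0,1]$ and $\mathcal{P}_B = [0,1]^{|E^{(b)}|}$ is the unit cube on the unconstrained edges; vertices of a product are products of vertices, so it suffices to control each factor.

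The vertices of $\mathcal{P}_B$ lie in $\{0,1\}^{|E^{(b)}|}$. Stanley's Lemma~\ref{3} applied to $G^S$ gives that every primitive integer generator of an extreme ray of the fully magic cone of $G^S$ has max label at most~$2$; rescaling these to the slice $\{k=1\}$ places the vertices of $\mathcal{P}_A$ in $\{0, 1/2, 1\}^{|E(G^S)|}$. Consequently every vertex of $\mathcal{P}$ has coordinates in $\{0, 1/2, 1\}$, and clearing denominators by a factor of $1$ or $2$ produces the primitive integer generator of the corresponding extreme ray of $\mathcal{C}$ with coordinates in $\{0,1,2\}$ and max label at most~$2$, which is exactly the content of Lemma~\ref{7}.

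The main obstacle I anticipate is carefully verifying the product decomposition $\mathcal{P} = \mathcal{P}_A \times \mathcal{P}_B$ and the dictionary between completely fundamental elements of $\Phi$ (defined semigroup-theoretically) and primitive integer generators of the extreme rays of $\mathcal{C}$ (a lattice-geometric notion, which requires that $\Phi$ be saturated in $\mathcal{C}$). Once these foundations are in place, together with the easy bookkeeping of loops under $G \mapsto G^S$, the conclusion follows immediately from applying Stanley's theorem to~$G^S$.
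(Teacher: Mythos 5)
Your reduction to the vertices of the slice polytope $\mathcal{P}=\mathcal{C}\cap\{k=1\}$ is sound as far as it goes: $\Phi$ is the full set of lattice points of the pointed cone $\mathcal{C}$, its completely fundamental elements live on the extreme rays of $\mathcal{C}$, and since the only constraints coupling the edge variables to $k$ are $0\le z_e\le k$, fixing $k=1$ does give the product $\mathcal{P}=\mathcal{P}_A\times\mathcal{P}_B$ with $\mathcal{P}_B$ a unit cube. The gap is in the final step, where you pass from Stanley's Lemma~\ref{3} to the vertices of $\mathcal{P}_A$. Lemma~\ref{3} controls the extreme rays of the magic cone $\{z\in\RR_{\ge0}^{E(G^S)}: s(v)\text{ all equal}\}$, graded by \emph{index}; but $\mathcal{P}_A$ is the $k=1$ slice of the augmented cone $\{(z,k): z \text{ magic},\ 0\le z_e\le k\}$, graded by the \emph{maximum label}, and the extra facets $z_e\le k$ create extreme rays that are not rescalings of completely fundamental magic labelings. (Compare $\RR_{\ge0}^2\cap[0,1]^2$: its vertex $(1,1)$ lies on no extreme ray of $\RR_{\ge0}^2$.) So "rescaling to the slice" does not place the vertices of $\mathcal{P}_A$ in $\{0,\tfrac12,1\}^{E(G^S)}$.

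Worse, the step you need is actually false, so it cannot be patched. Let $G^S$ be the bowtie, two triangles $abc$ and $cde$ glued at $c$. Its magic cone is the two-parameter family $(z_{ab},z_{bc},z_{ca},z_{cd},z_{de},z_{ec})=(x+2y,\,x,\,x,\,y,\,2x+y,\,y)$ with $x,y\ge0$, whose extreme rays $(1,1,1,0,2,0)$ and $(2,0,0,1,1,1)$ indeed obey Lemma~\ref{3}. Imposing $z_e\le1$ adds the facets $x+2y\le1$ and $2x+y\le1$, whose intersection $x=y=\tfrac13$ is a vertex of $\mathcal{P}_A$, namely $(1,\tfrac13,\tfrac13,\tfrac13,1,\tfrac13)$. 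Its primitive multiple $(3,1,1,1,3,1)$ with $k=3$ is completely fundamental in $\Phi$: in any decomposition of $n(3,1,1,1,3,1;3)$ each summand satisfies $k_i\ge\max(x_i+2y_i,\,2x_i+y_i)\ge\tfrac32(x_i+y_i)$, and summing over $i$ forces equality and hence $x_i=y_i$, i.e., parallel summands. This element carries the label $3$, so the statement of Lemma~\ref{7} fails already for $S=V$ here; the paper's own one-line argument for that case (identifying completely fundamental elements of the $(L,k)$-monoid with completely fundamental magic labelings) commits the same conflation of gradings that your rescaling step does. The failure is not cosmetic: for the bowtie one computes $M_V(k)=1,1,3,4,6,8,11,13,17,\dots$, whose even subsequence $1,3,6,11,17$ has non-constant second differences $1,2,1$, so $M_V(k)$ is not a quasi-polynomial of period $2$. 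Your write-up has the merit of isolating exactly the point where the argument must (and does) break, but as it stands it does not prove the lemma, and the bowtie shows the lemma itself is not correct as stated.
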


\begin{proof}
 If $S=V$, then every completely fundamental partially magic labeling over $S$ is a completely fundamental magic
labeling over $G$. By Lemma \ref{3}, it has index $1$ or $2$ and so the labels are among $0$, $1$, or~$2$.

Suppose that $S \subsetneq V$ and let $L$ be a partially magic labeling of $G$ over $S$ that has a label $\ge
3$ on  the edge $e$ which is incident to vertices $u$ and $v$. We will show that $L$ is not completely fundamental.
There are three cases:

\begin{figure}[htb]
\begin{center}
  \includegraphics [width=6in]{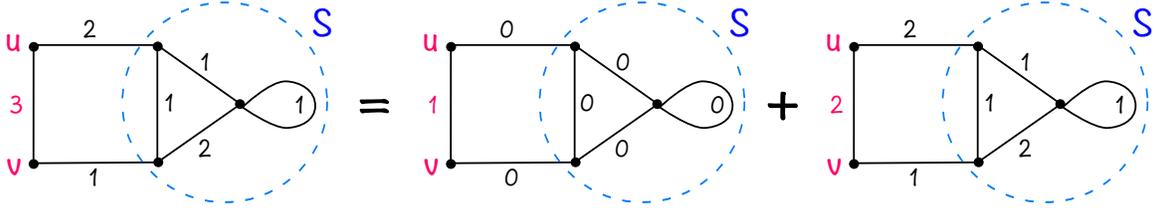}
\end{center}
\caption{A non-completely fundamental partially magic labeling in Case~1.}\label{F1}
\end{figure}

\noindent
{\it Case 1:}  $u, v \notin S$, that is, $e$ is not incident to any vertex in $S$.  We can write $L$
as the sum of $L'$ and $L''$, where all the labels of $L'$ are zero except for $e$ with $L'(e)=1$ and all the labels
of $L''$ are the same as $L$ except $e$ with $L''(e)=L(e)-1$; see Figure~\ref{F1}. Since $L'$ and $L''$ are both partially magic over $S$, then by definition $L$ is not a completely fundamental partially magic labeling over~$S$.

\vspace{10pt}
\noindent
{\it Case 2:} $u\notin S$ and $v \in S$. Let $G_S$ be the graph with vertex set $S$ obtained from $G$ by removing all
the edges of $G$ that are not incident to some vertex of $S$ and making loops out of those edges that are incident to
both $S$ and $V \setminus S$. Now define a labeling $L_S$ over $G_S$ such that all the edges that are incident to $S$
get the same labels as $L$ and all the new loops get the labels of $L$ that were on the original edges, as in Figure~\ref{F2}.

\begin{figure}[htb]
\begin{center}
  \includegraphics [width=4.3 in]{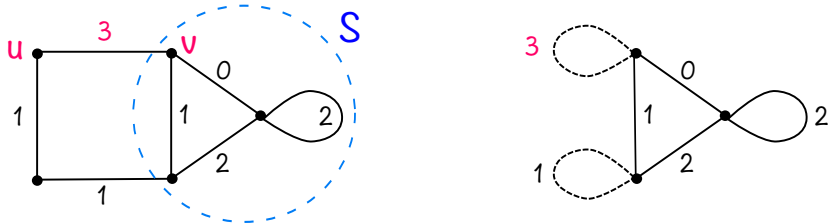}
\end{center}
\caption{A graph $G_S$ and magic labeling $L_S$ in Case~2.}\label{F2}
\end{figure}

Since $L$ is partially magic over $S$, $L_S$ is a magic labeling of $G_S$. However, $L_S(e)=L(e)\ge 3$ and so $S$ has
a vertex with sum $\ge 3$. Therefore, by Lemma \ref{3}, $L_S$ is not a completely fundamental magic labeling of $G_S$
and so there exist magic labelings $L_S^{i}$ of index $2$ such that $2 L_S=\sum L_{S}^{i}$, as in Figure~\ref{F3}. 
\begin{figure}[htb]
\begin{center}
  \includegraphics [width=6.5 in]{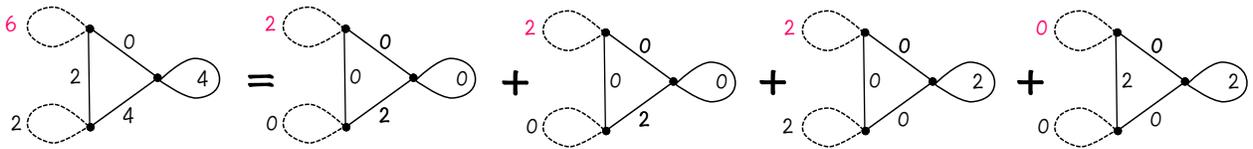}
\end{center}
\caption{A graph $G_S$ and the magic labelings $L_S^{i}$, where  $2L_S=\sum_{i=0}^{4} L_S^{i}$\;.}\label{F3}
\end{figure}
Now we extend each magic labeling $L_S^{i}$ to a partially magic labeling $L^{i}$ over $G$ as follows:
\[
  L^1(e) := \begin{cases}
            L^{1}_{S}(e) & \text{ if $e$ is incident to vertices of $S$ or $e$ is incident to  both vertices of $S$ and $V\setminus S$,}\\
            L(e) & \text{ if $e$ is not incident to $S$.}
            \end{cases} 
\]
Similarly we extend $L_{S}^{2}$ to $L^2$ on $G$. For $i \ge 3$, the extensions are:
\[
  L^i(e) := \begin{cases}
            L^{i}_{S}(e) & \text{ if $e$ is incident to vertices of $S$ or $e$ is incident to both vertices of $S$ and $V\setminus S$,}\\
            0 & \text{ if $e$ is not incident to $S$.}
            \end{cases}
\]
Therefore $ 2 L(e)=\sum L^{i}(e)$ for all $e \in E$; see Figure~\ref{F4}. 
By definition, $L^{i}$ is nonzero partially magic labeling of $G$ over $S$ with labels among $0,1,2$, for every $i>1$. This proves that $L$ is not completely fundamental partially magic labeling.

\begin{figure}[htb]
\begin{center}
  \includegraphics [width=6.5 in]{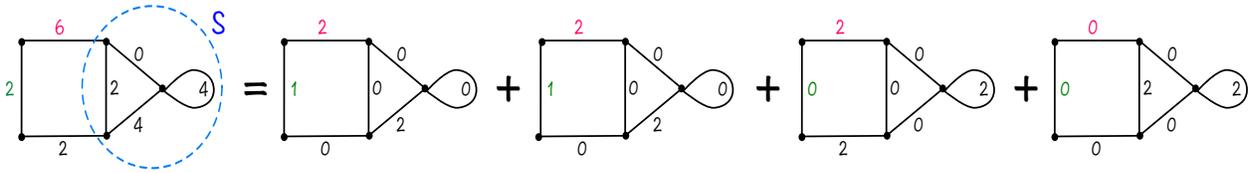}
\end{center}
\caption{A non-completely fundamental partially magic labeling $L$ with $2L=\sum_{i=0}^{4} L^{i}$\;.}\label{F4}
\end{figure}

\noindent
{\it Case 3:}  $u, v \in S$. The argument is similar to case 2, by constructing the graph $G_S$ with the labeling
$L_S$. Since $L$ is partially magic labeling over $S$, $L_S$ is a magic labeling of $G_S$. However, it is not
completely fundamental because it has a loop $e$ with label $L_S(e)=L(e)\ge 3$.  So there exist magic labelings
$L_S^{i}$ with labels among $0,1,2$, such that $2 L_S=\sum L_S^{i}$. We extend each $L_S^{i}$  to a labeling
$L^{i}$ over $G$ as follows:
\[
  L^1(e) := \begin{cases}
            L^{1}_{S}(e) & \text{ if $e$ is incident to vertices of $S$ or $e$ is incident to  both vertices of $S$ and $V\setminus S$,}\\
            L(e) & \text{ if $e$ is not incident to $S$.}
            \end{cases} 
\]
Similarly we extend $L_{S}^{2}$ to $L^2$ on $G$. For $i \ge 3$, we extend the labeling $L_{S}^{i}$ to $L^{i}$ over $G$
as follows:
\[
  L^i(e) := \begin{cases}
            L^{i}_{S}(e) & \text{ if $e$ is incident to vertices of $S$ or $e$ is incident to both vertices of $S$ and $V\setminus S$,}\\
            0 & \text{ if $e$ is not incident to $S$.}
            \end{cases}
\] 
By definition, $2L=\sum L^{i}$ where each $L^{i}$ is a partially magic labeling over $S$ and has labels $0,1$, or $2$.
Therefore, $L$ is not a completely fundamental magic labeling of $G$ over~$S$.
\end{proof}

\begin{proof}[Proof of Theorem \ref{thm:partialmagiccounting}]
By \eqref{a3} and \eqref{6},
\[
  F({\bf 1},z) \ = \ \sum_{k\ge 0} M_S(k) \, z^k
\]
is a rational function with denominator
\begin{equation}\label{M9}
  D({\bf 1},z) \ = \prod_{\beta \in \operatorname{CF} (\Phi)} \left( 1- {\bf 1}^{\beta}z^{k} \right)
\end{equation}
where $\operatorname{CF}(\Phi)$ is the set of completely fundamental elements of $\Phi$. 
According to Lemma \ref{7}, every completely fundamental element of $\Phi$ has labels at most $2$. Therefore
\begin{equation}\label{M0}
  \sum_{k\ge 0} M_S(k) \, z^k \ = \ \frac{h(z)}{(1-z)^{a}(1-z^2)^{b}}
\end{equation}
for some nonnegative integers $a$ and $b$, and some polynomial $h(z)$. 
Basic results on rational generating functions (see, e.g., \cite{StanEC}) imply that
$M_S(k)$ is a quasi-polynomial in $k$ with period at most 2.

Now let $G$ be a bipartite graph and $S \subseteq V$. We know that all the cycles of $G$ have even length, so it
satisfies  condition $(2)$ in Lemma \ref{4}. Therefore every completely fundamental magic labeling of $G$ has index
$1$ and so it cannot have a label $\ge 2$.
For partially magic labelings of $G$, we can use the same procedure of Lemma \ref{7} to see that if $L$ is a
completely fundamental partially magic labeling of $G$, it cannot have a label $\ge 2$. Therefore, in the generating
function (\ref{M0}), we have $b=0$ and so $M_S(k)$ is a polynomial in~$k$.
\end{proof}

The  equations in (\ref{abc}) together with $z_e \ge 0$ describe a pointed rational cone, and adding the
inequalities $z_e \le 1$  gives a rational polytope $\cP_S$.  Our reason for concentrating on the polytope
$\cP_S$ are structural results, due to Ehrhart and Macdonald (see, e.g., \cite{Ccd}), about the
lattice-point enumerator of any polytope $\cP \subset \RR^d$,
\[
  L_{\cP}(t) \ := \ \left| t \cP \cap \ZZ^{d} \right| ,
\]
where $t$ is a positive integer and $t \cP := \{tx : x \in \cP \}$ denotes the $t\th$ dilate of $\cP$. A partially magic $k$-labeling of a graph $G$ with labels among $\{0,1, \ldots, k\}$ (which is a solution of (\ref{abc})) is therefore an integer lattice point in the $k$-dilation of $\cP_S$, i.e., 
\[
  M_S(k) = L_{\cP_S}(k) \, .
\]
Let $M^{\circ}_S(k)$ be the number of \emph{positive} partially magic labelings of a graph $G$ over a subset $S$ of
vertices of $G$, that is, a partially magic labeling with labels among $\{1, \ldots, k\}$. Thus
${M}^{\circ}_S(k)=L_{\cP_S^{\circ}}(k+1)$, where $\cP^{\circ}_S$ is  the relative interior of the polytope
$\cP_S$. Ehrhart's famous theorem implies that $L_{\cP_S}(t)$ is a quasi-polynomial in $t$ of degree
$\dim{\cP_S}$, and the Ehrhart--Macdonald reciprocity theorem for rational polytopes gives the algebraic
relation $(-1)^{ \dim \cP } L_\cP(-t) = L_{ \cP^\circ } (t)$, which implies for us:

\begin{corollary}\label{12}
Let $G = (V, E)$ be a graph and $S \subseteq V$. Then $M^{\circ}_S(k)=\pm\;M_S(-k-1)$.
In particular, $M_s(k)$ and $M_S^\circ(k)$ are quasipolynomials with the same period.
\end{corollary}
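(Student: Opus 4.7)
The plan is to apply Ehrhart--Macdonald reciprocity directly to the rational polytope $\cP_S$ introduced in the paragraph preceding the corollary. That paragraph already records the two key identifications $M_S(k) = L_{\cP_S}(k)$ and $M^\circ_S(k) = L_{\cP_S^\circ}(k+1)$, where $\cP_S^\circ$ is the relative interior of $\cP_S$ taken inside the affine hull cut out by the magic equations at the vertices of $S$. Granting this dictionary, the reciprocity statement of the corollary is a one-line consequence.

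Concretely, I would invoke the Ehrhart--Macdonald reciprocity theorem for rational polytopes (cited in the paper via \cite{Ccd}), which gives the quasi-polynomial identity
\[ L_{\cP_S}(-t) \ = \ (-1)^{\dim \cP_S} \, L_{\cP_S^\circ}(t) . \]
Substituting $t = k+1$ and using the two identifications above yields
\[ M_S(-k-1) \ = \ L_{\cP_S}(-k-1) \ = \ (-1)^{\dim \cP_S} \, L_{\cP_S^\circ}(k+1) \ = \ (-1)^{\dim \cP_S} \, M^\circ_S(k) , \]
which is exactly $M^\circ_S(k) = \pm \, M_S(-k-1)$ with explicit sign $(-1)^{\dim \cP_S}$.

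For the ``same period'' assertion, if one writes $M_S(k) = \sum_{i=0}^{d} c_i(k) \, k^i$ with each $c_i$ periodic of period dividing $p$, then $\pm M_S(-k-1)$ is a quasi-polynomial in $k$ whose coefficients are $\ZZ$-linear combinations of the functions $c_i(-k-1)$; each of these still has period dividing $p$. Hence the period of $M^\circ_S$ divides that of $M_S$, and the symmetric argument (applying reciprocity in the reverse direction to recover $M_S(k)$ as $\pm M^\circ_S(-k-1)$ up to a shift) gives the reverse divisibility and therefore equality.

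There is really no substantive obstacle: the content of the corollary is just the dictionary between partially magic labelings and lattice points in dilates of $\cP_S$, combined with Ehrhart--Macdonald reciprocity. The only bookkeeping point worth care is the relative-interior convention, ensuring that labelings with labels in $\{1,\dots,k\}$ correspond to lattice points in $(k+1)\cP_S^\circ$ rather than $k\cP_S^\circ$; since this is already spelled out just before the corollary, I would cite it rather than re-derive it.
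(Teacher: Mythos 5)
Your proposal is correct and follows essentially the same route as the paper: identify $M_S(k)$ and $M^\circ_S(k)$ with lattice-point counts of $\cP_S$ and its relative interior, then apply Ehrhart--Macdonald reciprocity with $t = k+1$. Your explicit sign $(-1)^{\dim \cP_S}$ and the divisibility argument for the periods are fine additions but do not change the substance of the paper's argument.
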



\section{Antimagic Labelings}\label{sec:antimagic}

By definition of a partially magic labeling of a graph $G$ over a subset $S \subseteq V$, all the vertices
of $S$ have the same sum $s(v)$. If $S$ ranges over all subsets (of size $\ge 2$) of the vertices of $G$, we can write the number  $A_G(k)$ of  weak antimagic $k$-labelings as an inclusion-exclusion combination of the number of positive $k$-partially magic labelings:
\begin{equation}\label{11}
  A_G(k) \ = \sum_{\substack{S \subseteq V \\ |S| \ge 2}} c_S \, M^{\circ}_S(k)
\end{equation}
for some $c_S \in \ZZ$.
Thus Theorem \ref{thm:partialmagiccounting} and Corollary \ref{12} imply Theorem~\ref{pml}.

In preparation for our proof of Theorem~\ref{pml2}, we give a few basic properties of~$A_G(k)$.

\begin{lemma}\label{quasi}
The quasipolynomial $A_G(k)$ is constant zero if and only if $G$ has a $K_2$ component.
In either case, $A_G(0) = 0$.
\end{lemma}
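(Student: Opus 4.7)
My plan is to prove the biconditional in each direction separately and then read off $A_G(0)=0$ directly.

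For the ``if'' direction, suppose $G$ contains a $K_2$ component consisting of vertices $u$ and $v$ joined by the single edge $e$. Then for \emph{every} labeling $L$ the sums $s(u)=L(e)=s(v)$ agree, so the vertex sums cannot all be distinct, no labeling is weakly antimagic, and $A_G(k)=0$ for every $k$. In particular, the quasipolynomial is identically zero.

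For the ``only if'' direction I would prove the contrapositive: if $G$ has no $K_2$ component, then $A_G(k)>0$ for some $k$, which forces $A_G$ to be a nonzero quasipolynomial. The natural construction is the classical powers-of-two labeling: enumerate $E=\{e_1,\ldots,e_q\}$ and set $L(e_i):=2^{i-1}$, producing a positively labeled $(2^{q-1})$-labeling. By the uniqueness of binary expansions, the vertex sum $s(v)=\sum_{e_i\ni v}2^{i-1}$ encodes the indicator of the set of edges incident to $v$, so $s(u)=s(v)$ if and only if $u$ and $v$ are incident to exactly the same edges. For two distinct vertices $u\ne v$ this equality forces every edge incident to either of them to be a $uv$-edge (a loop is incident only to its base vertex), and therefore $\{u,v\}$ is a connected component whose only edges are parallel $uv$-edges -- precisely a $K_2$ component in the sense needed for the lemma. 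By hypothesis no such component exists, so all $s(v)$ are distinct and $L$ is weakly antimagic.

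Finally, $A_G(0)=0$ is immediate: under either reasonable reading of ``weakly antimagic $0$-labeling'' the only candidate has all labels equal to $0$ (or does not exist at all if positive labels are required), so every $s(v)=0$ and the sums collide as soon as $|V|\ge 2$. The only mildly delicate step in the whole argument is the case analysis in the only-if direction, namely pinning the failure of distinctness of the powers-of-two sums \emph{exactly} onto $K_2$-type components; this comes down to the uniqueness of binary representations together with the one-line observation that two distinct vertices with identical incident-edge sets can only be joined to each other.
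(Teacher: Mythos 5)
Your proof is correct and reaches all three conclusions, but the converse direction takes a genuinely different route from the paper's. The paper argues by induction on the number of edges: the base case is a disjoint union of paths on three vertices, and in the inductive step one deletes an edge $e=uv$ whose removal leaves no $K_2$ component, applies the inductive hypothesis to $G\setminus e$, and then gives $e$ a label large enough that $s(u)$ and $s(v)$ exceed all other vertex sums (they stay distinct from each other because they already differed in $G\setminus e$ and both increase by $L(e)$). Your powers-of-two labeling replaces that induction with a single explicit certificate; it is shorter, non-inductive, and it pinpoints exactly when distinctness can fail, namely when two vertices have identical incident-edge sets. That sharper diagnosis also exposes two boundary cases that the lemma's statement and the paper's own proof silently ignore in the multigraph setting the paper allows: a component consisting of two vertices joined by several parallel edges, and a pair of isolated vertices. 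In both situations $A_G\equiv 0$ without a literal $K_2$ component, so your phrase ``a $K_2$ component in the sense needed for the lemma'' is doing real work in the parallel-edge case, and your clause ``therefore $\{u,v\}$ is a connected component'' tacitly assumes the common incidence set is nonempty. Neither point is a defect relative to the paper, which makes the same implicit assumptions, but it would be worth a sentence acknowledging them. Finally, for $A_G(0)=0$ the paper reads the value off the inclusion--exclusion formula $A_G(k)=\sum_S c_S\,M_S^{\circ}(k)$ together with $M_S^{\circ}(0)=L_{\cP_S^{\circ}}(1)=0$; your direct argument from the definition is equivalent and needs only the standing assumption that $G$ has at least one edge and at least two vertices.
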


\begin{proof}
If $G$ has a $K_2$ component, then clearly there is no antimagic labeling on $G$ and so $A_G(k)=0$. Conversely,  if $G$ is a graph with
minimum number of edges such that removing any arbitrary edge results in a $K_2$ component in $G$, then each component of $G$ is a path with
$3$ vertices and $2$ edges, which admits an antimagic edge labeling. 
Now assume that $G $ is a graph consisting of an
edge $e$ such that the graph  $G\setminus e$, obtained from $G$ by removing $e$, does not have any $K_2$ component. By induction,  $G
\setminus e$  admits an antimagic labeling.  Now for the graph $G$, we can label the edge $e = vu$ sufficiently large such that $s(v)$ and $s(u)$ are different from each other vertex sum. Thus $A_G(k)\neq 0$.

The second statement follows from \eqref{11}, since by definition $\cP_S \subseteq [0,1]^E$ and so $M_S^{\circ}(0)=L_{\cP_S^{\circ}} (1)=0$.
\end{proof}

\begin{proof}[Proof of Theorem~\ref{pml2}]
The second statement can be proven similarly to the first statement in Section \ref{intro}. By Theorem \ref{pml}, we know that $A_G(k)$ is a
quasi-polynomial in $k$ of period $\le 2$, and we also know that $A_G(k+1)\ge A_G(k)$. So both  even and odd constituents are  polynomials in
$k$ with degree at most $|E|$ and so they can have at most $|E|$ integer roots.  By Lemma \ref{quasi}, one of the roots is 0. Therefore $A_G(2|E|)>0$.
\end{proof}


\section{Concluding Remarks and Open Problems}\label{sec:outlook}

Among the more recent results on antimagic graphs are some for \Def{directed graphs} (for which one of
the endpoints of each edge $e$ is designated to be the head, the other the tail of $e$); given an edge
labeling of a directed graph, we denote the \Def{oriented sum} $s(v)$ at the vertex $v$ to be the sum of the labels of all edges oriented away from $v$ minus the sum of the labels of all edges oriented towards $v$.
Such a labeling is \Def{antimagic} if each label is a distinct element of $\{1, 2, \dots, |E| \}$ and the
oriented sums $s(v)$ are pairwise distinct. It is known that every directed graph whose underlying
undirected graph is dense (in the sense that the minimum degree at least $C \log |V|$ for some absolute
constant $C>0$) is antimagic, and that almost every regular graph admits an orientation that is
antimagic~\cite{OP}. Hefetz, M\"utze, and Schwartz suggest a directed version of the Antimagic Graph
Conjecture; the two natural exceptions are the complete graph $K_3$ on three vertices with an edge
orientation that makes an oriented cycle, and $K_{1,2}$, the bipartite graph on the vertex partition
$\{v_1\}$ and $\{v_2, v_3\}$ where the orientations are from $v_2$ to $v_1$ and $v_1$ to $v_3$.

\begin{newmainconjecture}
Every connected directed graph except for the directed graphs $K_3$ and $K_{1,2}$ admits an antimagic labeling.
\end{newmainconjecture}

It is tempting to adjust our techniques to the directed settings, but there seems to be road blocks.
For starters, no directed graph has a \Def{magic labeling}, i.e., all sums $s(v)$ are equal.
To see this, let $A$ be the square matrix with $A_{ij}$ the oriented  sum of the vertex $v_{i}$ using the labels of all  edges between ${v_i}$ and $v_{j}$. 
Now if $L$ is a magic labeling with index $r$, the sum of each row of $A$ equals $r$, and so $r$ is an
eigenvalue of $A$ (with eigenvector $[1,1,\dots,1]$). However, $A$ is by construction a skew matrix, and so
it cannot have a real eigenvalue.

At any rate, a directed graph will have \emph{partially} magic labelings, defined analogously to the
undirected graph, and so we can enumerate antimagic labelings according to the directed analogue of
\eqref{11}. To assert the existence of an antimagic labeling, one would like to bound the period of the
antimagic quasipolynomial, as in Theorem~\ref{thm:partialmagiccounting}. However, this does not seem
possible. Namely, if the subset $S \subset V$ includes a directed path
$\cdots \rightarrow v_1 \rightarrow v_2 \rightarrow \dots \rightarrow v_s \rightarrow \cdots$ such that
the vertices $v_2, \dots, v_{s-1}$ are not adjacent to any other vertices, then a completely fundamental
partially magic labeling $L_S$ with index $\ge 1$ implies that the label on each edge of the path is greater
than that on the previous one. Thus, contrary to the situation in Lemma \ref{7}, the upper bound for the
labels in $L_S$ can be arbitrarily large.
Consequently, the periods of the partial-magic quasi-polynomials, and thus those of the antimagic
quasi-polynomials, can be arbitrarily large.

The papers \cite{iop,OP} gives several further open problems on antimagic graphs, some of which could be
tackled with the methods presented here. 
%
We close with an open problem about a natural extension of our antimagic counting function.
Namely, it follows from \cite{iop} that the number of antimagic labelings of a given graph $G$ with
\emph{distinct} labels between 1 and $k$ is a quasi-polynomial in $k$. Can anything substantial be said
about its period? It is unclear to us whether the methods presented here are of any help, however, 
any positive result would open the door to applying these ideas once more towards the Antimagic Graph Conjecture.

\bibliographystyle{amsplain}
\bibliography{partialmagic}

\setlength{\parskip}{0cm} 

\end{document}